\documentclass[final,3p]{elsarticle}

\usepackage{amssymb}
\usepackage{amsmath}
\usepackage{amsthm}
\usepackage{booktabs}
\usepackage{multicol}
\usepackage{multirow}
\usepackage{makecell}
\usepackage{algorithm, algorithmic}

\newcommand{\vseq}[2]{%
  \rotatebox[origin=c]{90}{%
    \begin{tabular}{@{}c@{}}
      \textbf{#1}\\[-1pt]
      {\scriptsize #2}
    \end{tabular}%
  }%
}

\usepackage[hidelinks]{hyperref}

\theoremstyle{plain}
\newtheorem{theorem}{Theorem}

\newtheorem*{lemma*}{Lemma}

\theoremstyle{definition}     
\newtheorem{definition}{Definition}

\newtheorem{remark}{Remark}

\numberwithin{theorem}{section}
\numberwithin{definition}{section}
\numberwithin{lemma}{section}
\numberwithin{proposition}{section}
\numberwithin{corollary}{section}
\numberwithin{notation}{section}
\numberwithin{remark}{section}
\numberwithin{example}{section}

\begin{document}

\begin{frontmatter}

\title{Tensor CUR Decomposition under the Linear-Map-Based Tensor-Tensor Multiplication}

\author[label1,label2,label3]{Susana López-Moreno}
\author[label1,label2,label3]{June-Ho Lee}
\author[label4]{Taehyeong Kim\corref{cor1}}
\ead{thkim0519@knu.ac.kr}

\affiliation[label1]{organization={Department of Mathematics, Pusan National University},
            city={Busan},
            country={Republic of Korea}}

\affiliation[label2]{organization={Humanoid Olfactory Display Center, Pusan National University},
            city={Yangsan-si},
            state={Gyeongsangnam-do},
            country={Republic of Korea}}
            
\affiliation[label3]{organization={Industrial Mathematics Center, Pusan National University},
            city={Busan},
            country={Republic of Korea}}
            
\affiliation[label4]{organization={Nonlinear Dynamics and Mathematical Application Center, Kyungpook National University},
            city={Daegu},
            country={Republic of Korea}}
            
\cortext[cor1]{Corresponding author}

\begin{abstract}
\small
The factorization of three-dimensional data continues to gain attention due to its relevance in representing and compressing large-scale datasets.
The linear-map-based tensor–tensor multiplication is a matrix-mimetic operation that extends the notion of matrix multiplication to higher-order tensors, and which is a generalization of the T-product.
Under this framework, we introduce the tensor CUR decomposition, show its performance in video foreground-background separation for different linear maps and compare it to a robust matrix CUR decomposition, another tensor approximation and the slice-based singular value decomposition (SS-SVD). We also provide a theoretical analysis of our tensor CUR decomposition, extending classical matrix results to establish exactness conditions and perturbation bounds.
\end{abstract}

\begin{keyword}
tensor-tensor multiplication \sep tensor decomposition \sep CUR

\end{keyword}

\end{frontmatter}

\section{Introduction}
\label{sec:introduction}
Low rank tensor decomposition remains an active research area, driven by the fact that multiway structure arises naturally in data analysis, including video and imaging, signal processing, and machine learning, and exploiting that structure can yield models that are both more compact and faithful than those obtained by flattening the data.

A notable recent direction is matrix-mimetic tensor algebra, where tensors are treated as linear operators under an algebraic product that is related to matrix multiplication.
The linear-map-based tensor-tensor multiplication was introduced in \citep{kernfeld2015tensor} as a generalization of the classical T-product \citep{kilmer2011factorization} and as a way of factorizing a tensor as the product of two tensors without using the block matrix structure of the latter.
Under this multiplication, a tensor singular value decomposition (SVD) was also defined. It was subsequently extended for orthogonal surjective linear maps in \citep{keegan2025projected} and for injective linear maps in \citep{ju2025computation}.
For orthogonal surjective maps, the corresponding tensor SVD theory and experiments show that its SVD can outperform higher-order singular value decomposition (HOSVD) approximations on multiway data such as hyperspectral images. Despite these advances, few tensor factorizations beyond the SVD family have been developed within the linear-map-based tensor-tensor multiplication.
This motivates the extension of other classical matrix decompositions that are central in data analysis such as CUR decompositions, which has been done for other tensor-tensor multiplications such as the T-product in \citep{chen2022tensor}.

The remainder of the paper is organized as follows. In Section~\ref{sec:preliminaries}, we introduce concepts and notation needed to understand the paper.
In Section~\ref{sec:definition_CUR}, we present the proposed tensor CUR, followed by its theoretical analysis in Section~\ref{sec:theoretical_analysis}.
Finally, in Section~\ref{Sec:numerical_experiments}, we formulate the problem under consideration and provide experimental results on foreground--background separation in video data and compare its performance with a robust matrix-wise CUR decomposition \citep{cai2021robust}, a tensor Bhattacharya–Messner decomposition (BM) \citep{tian2025tensor}, and the spatiotemporal slice-based singular value decomposition (SS-SVD) \citep{kajo2018svd}.

\section{Preliminaries}
\label{sec:preliminaries}
\subsection{Notation}
We establish the notation that will be used throughout the paper. Let $\mathbb{K}=\mathbb{R},\mathbb{C}$.
We consider tensors of order at most three: vectors of size $m$,
matrices of size $m\times n$, and tensors of size $m\times n\times p$.
The linear-map-based tensor-tensor multiplication regards third-order tensors $\mathcal{A}\in\mathbb{K}^{m\times n\times p}$ as stacks of $p$ matrices of size $m\times n$, called the \textit{frontal slices} and denoted by $\mathcal{A}^{(i)}$, $i\in\{1,\dots,p\}$.
We express a tensor as $\mathcal{A}=\left[\begin{array}{c|c|c|c}
     \mathcal{A}^{(1)}& \mathcal{A}^{(2)}&\cdots&\mathcal{A}^{(p)}
\end{array}\right]$.
MATLAB-like indexing will be used throughout: for a matrix $M$, $M(I,:)$ and $M(:,J)$ denote submatrices formed by the rows and columns indexed by $I$ and
$J$, respectively.
Lateral slices of a tensor are denoted as $\mathcal{A}(:,J,:) \in \mathbb{K}^{m \times |J| \times p}$, with $J\subset\{1,\dots,n\}$.
The mode-3 product of a tensor $\mathcal{A}$ and a matrix $M$ is denoted by $\widehat{\mathcal{A}}=\mathcal{A}\times_3 M$, and the symbol $\triangle$ denotes the facewise  product of two tensors.
\subsection{Linear-map-based tensor-tensor multiplication}

\begin{definition}
Let $M \in \mathbb{K}^{p \times p}$ be invertible, and let $\mathcal{A}\in\mathbb{K}^{m\times n \times p}$ and $\mathcal{B}\in\mathbb{K}^{n\times s \times p}$.
The \emph{tensor-tensor multiplication} (or \emph{$*_M$-product}) of $\mathcal{A}$ and $\mathcal{B}$ is the tensor $\mathcal{A}*_M\mathcal{B}\in\mathbb{K}^{m\times s\times p}$ defined by
\begin{equation*}
\mathcal{A}*_M\mathcal{B}=((\mathcal{A} \times_3 M)\triangle (\mathcal{B} \times_3 M))\times_3 M^{-1}.
\end{equation*}
For surjective (respectively, injective) matrices, the inverse is replaced by the pseudoinverse $M^+=M^H(MM^H)^{-1}$ (respectively, $M^+=(M^HM)^{-1}M^H$), which yields $\mathcal{A}*_M\mathcal{B}=((\mathcal{A} \times_3 M)\triangle (\mathcal{B} \times_3 M))\times_3 M^{+}$.
\end{definition}

For the theoretical analysis in Section \ref{sec:theoretical_analysis} of the proposed tensor CUR decomposition, we introduce the following notion of multirank under the $*_M$-product.
\begin{definition}\label{defn:multirank}
Let $\mathcal{A}\in\mathbb{K}^{m\times n\times p}$ and let $M\in\mathbb{K}^{q\times p}$ be of full-rank.
The \textit{multirank} of $\mathcal{A}$ with respect to the $*_M$-product is defined as
\begin{equation*}
    \operatorname{rank}_m(\mathcal{A})=(\operatorname{rank}(\widehat{\mathcal{A}}^{(1)}),\dots,\operatorname{rank}(\widehat{\mathcal{A}}^{(q)})),
\end{equation*}
where $\widehat{\mathcal{A}}^{(k)}$ denotes the $k$-th frontal slice of $\widehat{\mathcal{A}}$.
\end{definition}

\begin{definition}
Let $\mathcal{A}\in\mathbb{K}^{m\times n\times p}$ and let $M\in\mathbb{K}^{q\times p}$ be of full-rank.
The \textit{tensor spectral norm with respect to the $*_M$-product} is defined as
\begin{equation*}
    \| \mathcal{A} \|_{2,*_M} =\max_{k=1,\dots,q} \|
\widehat{\mathcal{A}}^{(k)}\|_2,
\end{equation*}
where $\|\cdot\|_2$ denotes the spectral norm in the matrix sense.
\end{definition}

\section{Tensor CUR decomposition under the linear-map-based tensor-tensor multiplication}\label{sec:definition_CUR}

Given $\mathcal{A}\in\mathbb{K}^{m\times n\times p}$ and $M\in\mathbb{K}^{q\times p}$ of full rank, a practical construction of the $*_M$-CUR decomposition would proceed as follows: Fix column and row index sets $J\subset\{1,\dots,n\}$ and $I\subset\{1,\dots,m\}$ and get $\mathcal{C}:=\mathcal{A}(:,J,:)$,
$\mathcal{R}:=\mathcal{A}(I,:,:)$, $\mathcal{U}:=\mathcal{A}(I,J,:)$.
Map $\mathcal{A}$ to $\widehat{\mathcal{A}}:=\mathcal{A}\times_3 M$, which yields frontal slices $\widehat{\mathcal{A}}^{(k)}\in\mathbb{K}^{m\times n}$ for $k=1,\dots,q$.
Similarly map $\mathcal{C}$, $\mathcal{R}$ and $\mathcal{U}$ to $\widehat{\mathcal{C}}$, $\widehat{\mathcal{R}}$ and $\widehat{\mathcal{U}}$, respectively.
On each slice perform the matrix CUR decomposition
\begin{equation*}
    \widehat{\mathcal{A}}^{(k)} = \widehat{\mathcal{C}}^{(k)}(\widehat{\mathcal{U}}^{(k)})^+\widehat{\mathcal{R}}^{(k)} + \mathcal{E}^{(k)},
\end{equation*}
where $\widehat{\mathcal{C}}^{(k)}:=\widehat{\mathcal{A}}^{(k)}(:,J)$, $\widehat{\mathcal{R}}^{(k)}:=\widehat{\mathcal{A}}^{(k)}(I,:)$, $(\widehat{\mathcal{U}}^{(k)})^+$ is the pseudoinverse of $\widehat{\mathcal{U}}^{(k)}$ for each $k$, and where $\mathcal{E}$ is the slice-wise residual error tensor.
Stack $\widehat{\mathcal{C}}^{(k)},\widehat{\mathcal{U}}^{(k)},\widehat{\mathcal{R}}^{(k)}$ as frontal slices of $\widehat{\mathcal{C}}:=\widehat{\mathcal{A}}(:,J,:)$, $\widehat{\mathcal{U}}\in\mathbb{K}^{|I|\times |J|\times q}$, and $\widehat{\mathcal{R}}:=\widehat{\mathcal{A}}(I,:,:)$.
In the original domain define $\mathcal{U}^{+}:=\widehat{\mathcal{U}}^{+}\times_3 M^{+}$,  where $\widehat{\mathcal{U}}^+$ refers to the tensor $\widehat{\mathcal{U}}^+:=\left[\begin{array}{c|c|c}
     (\widehat{\mathcal{U}}^{(1)})^+ & \cdots & (\widehat{\mathcal{U}}^{(q)})^+
\end{array}\right]$.
The $*_M$-CUR approximation of $\mathcal{A}$ is, then,
\begin{equation*}
\mathcal{A}\approx\mathcal{C}*_M \mathcal{U}^+*_M \mathcal{R}.
\end{equation*}

\begin{remark}
Depending on the properties of the linear map $M$, the resulting decomposition exhibits different properties. We summarize the distinctions between the invertible, surjective, and injective cases in Table \ref{tab:M_cases}. While our theoretical analysis primarily focuses on the invertible case for rigor, the numerical experiments in Section \ref{Sec:numerical_experiments} cover all three scenarios.
\end{remark}

\begin{table}[h]
\centering
\caption{Characteristics of $*_M$-CUR decomposition based on the linear map $M$.}
\label{tab:M_cases}
\small
\begin{tabular}{l|l|l}
\toprule
\textbf{Case} & \textbf{Condition on $M \in \mathbb{K}^{q \times p}$} & \textbf{Decomposition Characteristics} \\
\midrule
Invertible & $q=p$, full Rank & Possible exact reconstruction: $\mathcal{A} = \mathcal{C} *_M \mathcal{U}^+ *_M \mathcal{R}$. \\
\midrule
Surjective & $q < p$, full Rank & Approximation on projected space: $\mathcal{A} \times_3 (M^+ M)$. \\
\midrule
Injective & $q > p$, full Rank &  \makecell[l]{Non-associative product $\Rightarrow$ $*_M$-pseudo-CUR.\\ Maps $\mathcal{A}$ to $\widehat{\mathcal{A}} \times_3 M^+$ (See \cite{ju2025computation}).} \\
\bottomrule
\end{tabular}
\end{table}

\subsection{Theoretical analysis}\label{sec:theoretical_analysis}
Due to the construction of the $*_M$-product, the equality of the tensor CUR decomposition is a result derived from the matrix case.

\begin{theorem}\label{thm:equality_CUR}
Let $\mathcal{A}\in\mathbb{K}^{m\times n\times p}$ and let $M\in\mathbb{K}^{p\times p}$ be invertible.
Fix index sets $I\subset\{1,\dots,m\}$, $J\subset\{1,\dots,n\}$ such that
$\operatorname{rank}_m(\mathcal{U})=\operatorname{rank}_m(\mathcal{A})$.
Then, the $*_M$-CUR reconstruction is exact, i.e.,
\begin{equation}\label{eq:exact_CUR}
    \mathcal{A}=\mathcal{C}*_M\mathcal{U}^+*_M\mathcal{R}.
\end{equation}
\end{theorem}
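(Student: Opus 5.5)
The plan is to reduce the tensor identity \eqref{eq:exact_CUR} to $p$ independent matrix CUR identities in the transform domain, and then transport the result back with $M^{-1}$.

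\textbf{Step 1: the product is facewise in the transform domain.} Since $M$ is invertible, $\times_3 M$ is a bijection on $\mathbb{K}^{m\times n\times p}$. By the definition of the $*_M$-product,
\[
(\mathcal{A}*_M\mathcal{B})\times_3 M=\widehat{\mathcal{A}}\triangle\widehat{\mathcal{B}} .
\]
Applying this twice gives
\[
(\mathcal{C}*_M\mathcal{U}^+*_M\mathcal{R})\times_3 M=\widehat{\mathcal{C}}\triangle\widehat{\mathcal{U}^+}\triangle\widehat{\mathcal{R}} .
\]
No associativity issue arises, because both groupings yield the same facewise product. By construction, $\widehat{\mathcal{U}^+}=(\widehat{\mathcal{U}}^+\times_3 M^{-1})\times_3 M=\widehat{\mathcal{U}}^+$.

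\textbf{Step 2: transforming commutes with subtensor extraction.} The mode-3 product acts only on the third index, so
\[
\widehat{\mathcal{C}}=\widehat{\mathcal{A}}(:,J,:),\qquad \widehat{\mathcal{R}}=\widehat{\mathcal{A}}(I,:,:),\qquad \widehat{\mathcal{U}}=\widehat{\mathcal{A}}(I,J,:).
\]
Hence, for every $k$, the matrices $\widehat{\mathcal{C}}^{(k)}$, $\widehat{\mathcal{U}}^{(k)}$ and $\widehat{\mathcal{R}}^{(k)}$ are exactly the column, intersection and row submatrices of $\widehat{\mathcal{A}}^{(k)}$. It therefore suffices to show
\[
\widehat{\mathcal{A}}^{(k)}=\widehat{\mathcal{C}}^{(k)}(\widehat{\mathcal{U}}^{(k)})^+\widehat{\mathcal{R}}^{(k)}\qquad\text{for all } k .
\]
Applying $\times_3 M^{-1}$ to this slicewise identity then gives \eqref{eq:exact_CUR}.

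\textbf{Step 3: the matrix CUR exactness result, slice by slice.} The hypothesis $\operatorname{rank}_m(\mathcal{U})=\operatorname{rank}_m(\mathcal{A})$ says $\operatorname{rank}(\widehat{\mathcal{U}}^{(k)})=\operatorname{rank}(\widehat{\mathcal{A}}^{(k)})=:r_k$ for each $k$. I would either cite the classical matrix statement or prove it briefly, as follows. Let $A$ have rank $r$ and let $U=A(I,J)$ also have rank $r$.

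First, $C=A(:,J)$ contains $U$ as a row-submatrix, so $r\le\operatorname{rank}C\le r$. Since $\operatorname{col}(C)\subseteq\operatorname{col}(A)$ and the ranks agree, $\operatorname{col}(C)=\operatorname{col}(A)$. Likewise $\operatorname{row}(R)=\operatorname{row}(A)$.

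It follows that $A=CX$ for some $X$. Restricting to the rows $I$ gives $R=UX$. Similarly $A=YR$, and restricting to the columns $J$ gives $C=YU$. Then
\[
CU^+R=YUU^+UX=YUX=YR=A .
\]

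\textbf{Main obstacle.} Step 3 carries the essential content, and the key point there is deducing $\operatorname{col}(C)=\operatorname{col}(A)$ from the rank equality on $U$. The remaining steps are bookkeeping, but Step 2 needs care to make the definition of $\mathcal{U}^+$ and the commutation of indexing with $\times_3 M$ explicit.
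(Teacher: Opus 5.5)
Your proposal is correct and follows the same route as the paper. Both apply the matrix CUR exactness identity to each transformed slice $\widehat{\mathcal{A}}^{(k)}$ under the slicewise rank condition, stack the slices into $\widehat{\mathcal{A}}=\widehat{\mathcal{C}}\triangle\widehat{\mathcal{U}}^+\triangle\widehat{\mathcal{R}}$, and pull back via $\times_3 M^{-1}$ using the definition of the $*_M$-product. The only differences are that you prove the matrix lemma yourself (via $A=CX=YR$, $R=UX$, $C=YU$) instead of citing it, and you state explicitly that subtensor extraction commutes with $\times_3 M$, which the paper uses implicitly.
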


\begin{proof}
Let $\mathcal{A}$ be a tensor in $\mathbb{K}^{m\times n\times p}$ and let $\widehat{\mathcal{A}}$ be its image under the mode-3 product with $M\in\mathbb{K}^{p\times p}$.
Under the rank condition
$\operatorname{rank}(\widehat{\mathcal{U}}^{(k)})=\operatorname{rank}(\widehat{\mathcal{A}}^{(k)})$ for all $k=\{1,\dots,p\}$,
the matrix CUR exactness identity, proven in \citep{cai2020rapid}, yields
\begin{equation*}
\widehat{\mathcal{A}}^{(k)} = \widehat{\mathcal{C}}^{(k)}\big(\widehat{\mathcal{U}}^{(k)}\big)^+ \widehat{\mathcal{R}}^{(k)}
\end{equation*}
for each $k$.
By stacking the matrices $\widehat{\mathcal{C}}=\left[\begin{array}{c|c|c}
     \widehat{\mathcal{C}}^{(1)}&\cdots&\widehat{\mathcal{C}}^{(p)}
\end{array}\right]$, $\widehat{\mathcal{U}}=\left[\begin{array}{c|c|c}
     \widehat{\mathcal{U}}^{(1)}&\cdots&\widehat{\mathcal{U}}^{(p)}
\end{array}\right]$, $\widehat{\mathcal{R}}=\left[\begin{array}{c|c|c}
     \widehat{\mathcal{R}}^{(1)}&\cdots&\widehat{\mathcal{R}}^{(p)}
\end{array}\right]$, and by performing the facewise product, we get that
\begin{equation*}
\widehat{\mathcal{A}}= \widehat{\mathcal{C}}\triangle(\widehat{\mathcal{U}})^+\triangle \widehat{\mathcal{R}}.
\end{equation*}

We then have that by the properties of the mode-$3$ and the $*_M$-product for invertible $M$,
\begin{equation*}
\begin{aligned}
\mathcal{A} = \widehat{\mathcal{A}}\times_3M^+
&=\begin{bmatrix}(\widehat{\mathcal{C}}\triangle\widehat{\mathcal{U}}^+)\triangle \widehat{\mathcal{R}}\end{bmatrix}\times_3 M^+ \\
&=\begin{bmatrix} \big(\begin{bmatrix}(\mathcal{C}\times_3 M)\triangle (\mathcal{U}^+\times_3M)\end{bmatrix}\times_3 (MM^+)\big)\triangle (\mathcal{R}\times_3 M)\end{bmatrix}\times_3 M^+\\
&= \begin{bmatrix}\big((\mathcal{C}*_M\mathcal{U^+})\times_3 M\big)\triangle (\mathcal{R}\times_3 M)\end{bmatrix}\times_3 M^+ \\
&=\mathcal{C}*_M\mathcal{U}^+*_M\mathcal{R},
\end{aligned}
\vspace{-8pt}
\end{equation*}
where $\mathcal{U}^+=(\widehat{\mathcal{U}})^+\times_3 M^+$.
\end{proof}

Perturbation analysis on the $*_M$-CUR decomposition can also be generalized from the matrix case by applying it slice-wise on the domain induced by  $\times_3 M$.

\begin{theorem}\label{thm:perturbation_CUR}
Let $M\in\mathbb{K}^{p\times p}$ be invertible and let $\mathcal{A}\in\mathbb{K}^{m\times n\times p}$.
Let $\mathcal{A}_{\mathcal{E}}= \mathcal{A} + \mathcal{E}$, such that $\|
\mathcal{E}\|_{2,*_M}$ is sufficiently small, and construct $\mathcal{C}_{\mathcal{E}}=\mathcal{A}_{\mathcal{E}}(:,J,:)$, $\mathcal{R}_{\mathcal{E}}=\mathcal{A}_{\mathcal{E}}(I,:,:)$, and $\mathcal{U}_{\mathcal{E}}=\mathcal{A}_{\mathcal{E}}(I,J,:)$.
Define the $*_M$-CUR approximation of $\mathcal{A}_{\mathcal{E}}$ as
$\widetilde{\mathcal{A}}=\mathcal{C}_{\mathcal{E}}*_M\mathcal{U}_{\mathcal{E}}^+*_M \mathcal{R}_{\mathcal{E}}$, and assume $\sigma_r(\widehat{\mathcal{U}}^{(k)})>2\mu \|
\widehat{\mathcal{E}}^{(k)}(I,J)\|_2$, with $\mu\in [1,3]$ as in \cite[Theorem 4.6]{hamm2021perturbations}. Then, there exists a constant $C$ depending only on $\|\mathcal{C}\|_{2,*_M}$, $\|\mathcal{U}^+\|_{2,*_M}$, and $\|\mathcal{R}\|_{2,*_M}$ such that
\begin{equation*}
    \|
\mathcal{A} - \widetilde{\mathcal{A}}\|_{2,*_M}\leq C \| \mathcal{E}\|_{2,*_M}.
\end{equation*}
\end{theorem}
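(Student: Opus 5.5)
The plan is to reduce the statement to the matrix perturbation bound of \cite[Theorem 4.6]{hamm2021perturbations}, applied slice by slice in the transformed domain, and then take a maximum over slices. Since $M$ is invertible, the $*_M$-product is computed exactly facewise after applying $\times_3 M$. Arguing as in the proof of Theorem~\ref{thm:equality_CUR}, I would first show that
\begin{equation*}
\widetilde{\mathcal{A}}\times_3 M=\widehat{\mathcal{C}}_{\mathcal{E}}\triangle\widehat{\mathcal{U}}_{\mathcal{E}}^+\triangle\widehat{\mathcal{R}}_{\mathcal{E}} .
\end{equation*}
The mode-3 product acts only along the third dimension, so it commutes with index selection in the first two modes. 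Hence $\widehat{\mathcal{C}}_{\mathcal{E}}=\widehat{\mathcal{A}}_{\mathcal{E}}(:,J,:)$, and similarly for $\widehat{\mathcal{R}}_{\mathcal{E}}$ and $\widehat{\mathcal{U}}_{\mathcal{E}}$. Linearity gives $\widehat{\mathcal{A}}_{\mathcal{E}}=\widehat{\mathcal{A}}+\widehat{\mathcal{E}}$. So the $k$-th slice of $\widetilde{\mathcal{A}}\times_3 M$ is precisely the matrix CUR approximation of the perturbed matrix $\widehat{\mathcal{A}}^{(k)}+\widehat{\mathcal{E}}^{(k)}$ built from the index sets $I,J$.

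Next I would fix $k$ and check the hypotheses of \cite[Theorem 4.6]{hamm2021perturbations} for $\widehat{\mathcal{A}}^{(k)}$ with perturbation $\widehat{\mathcal{E}}^{(k)}$. Here $r$ is read as the rank $r_k$ of $\widehat{\mathcal{A}}^{(k)}$, and I assume the exactness condition $\operatorname{rank}_m(\mathcal{U})=\operatorname{rank}_m(\mathcal{A})$ of Theorem~\ref{thm:equality_CUR}. That condition makes the unperturbed CUR exact, and the gap assumption $\sigma_r(\widehat{\mathcal{U}}^{(k)})>2\mu\|\widehat{\mathcal{E}}^{(k)}(I,J)\|_2$ is exactly the one required there. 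The matrix theorem then yields a bound of the form
\begin{equation*}
\|\widehat{\mathcal{A}}^{(k)}-\widehat{\widetilde{\mathcal{A}}}^{(k)}\|_2\le C_k\,\|\widehat{\mathcal{E}}^{(k)}\|_2 .
\end{equation*}
Up to absolute constants (the factor $\mu$ and small integers), $C_k$ is a polynomial in $\|\widehat{\mathcal{C}}^{(k)}\|_2$, $\|(\widehat{\mathcal{U}}^{(k)})^+\|_2$ and $\|\widehat{\mathcal{R}}^{(k)}\|_2$. It typically contains terms such as $\|\widehat{\mathcal{C}}^{(k)}(\widehat{\mathcal{U}}^{(k)})^+\|_2$ and $\|(\widehat{\mathcal{U}}^{(k)})^+\widehat{\mathcal{R}}^{(k)}\|_2$, together with $\|\widehat{\mathcal{C}}^{(k)}\|_2\|(\widehat{\mathcal{U}}^{(k)})^+\|_2^2\|\widehat{\mathcal{R}}^{(k)}\|_2$-type terms. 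The bound of $2$ on the relevant pseudoinverse-perturbation factor comes from the gap condition. Each of these quantities is at most the corresponding $\|\cdot\|_{2,*_M}$ norm, since $\widehat{\mathcal{U}^+}=\widehat{\mathcal{U}}^+$ by invertibility of $M$. So $C_k\le C$ for a single $C$ that is monotone in $\|\mathcal{C}\|_{2,*_M}$, $\|\mathcal{U}^+\|_{2,*_M}$ and $\|\mathcal{R}\|_{2,*_M}$.

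Finally, $\|\widehat{\mathcal{E}}^{(k)}\|_2\le\|\mathcal{E}\|_{2,*_M}$ for every $k$. Taking the maximum over $k$ of the left-hand side gives $\|\mathcal{A}-\widetilde{\mathcal{A}}\|_{2,*_M}\le C\|\mathcal{E}\|_{2,*_M}$, because $(\mathcal{A}-\widetilde{\mathcal{A}})\times_3M$ has slices $\widehat{\mathcal{A}}^{(k)}-\widehat{\widetilde{\mathcal{A}}}^{(k)}$.

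The main obstacle is the middle step: stating precisely which form of the matrix bound from \cite{hamm2021perturbations} is used, and ensuring its hypotheses hold uniformly in $k$. In particular:
\begin{itemize}
\item the rank $r$ may vary with $k$;
\item the matrix result may be phrased with the perturbed pseudoinverse, or with the truncated (rank-$r$) pseudoinverse $\widehat{\mathcal{U}}_{\mathcal{E}}^{(k)}$.
\end{itemize}
I would handle this slice by slice with $r=r_k$, reading $\sigma_r$ as $\sigma_{r_k}$. The phrase "sufficiently small" on $\|\mathcal{E}\|_{2,*_M}$ is interpreted as guaranteeing the gap condition for all $k$ simultaneously. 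If the matrix theorem is stated with a truncated pseudoinverse, I would note that the $*_M$-CUR is then understood with slicewise rank-$r_k$ truncation, and that the bound carries over unchanged.
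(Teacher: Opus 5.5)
Your proposal is correct and follows the same route as the paper's proof: pass to the transformed domain with $\times_3 M$, apply \cite[Theorem 4.6]{hamm2021perturbations} to each frontal slice $\widehat{\mathcal{A}}^{(k)}+\widehat{\mathcal{E}}^{(k)}$, and take the maximum over $k$. The paper's proof leaves several points implicit that you make explicit: the exactness condition that the matrix theorem presupposes (with your reading $r=r_k$ it actually follows from the gap hypothesis, since $\widehat{\mathcal{U}}^{(k)}$ is a submatrix of $\widehat{\mathcal{A}}^{(k)}$), the possible rank-$r_k$ truncation of $\widehat{\mathcal{U}}_{\mathcal{E}}^{(k)}$, and the single uniform constant obtained from $\|\widehat{\mathcal{C}}^{(k)}\|_2\le\|\mathcal{C}\|_{2,*_M}$ and its analogues.
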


\begin{proof}
Let $\widehat{\mathcal{A}}=\mathcal{A}\times_3M$ and $\widehat{\mathcal{A}}_{\mathcal{E}}=\mathcal{A}_{\mathcal{E}}\times_3 M$. Then $\widehat{\mathcal{A}}_{\mathcal{E}}=\widehat{\mathcal{A}}+\widehat{\mathcal{E}}$, where $\widehat{\mathcal{E}}:=\mathcal{E}\times_3 M$.
For each $k=1,\dots,p$, consider the frontal slices $\widehat{\mathcal{A}}_{\mathcal{E}}^{(k)}$. By \cite[Theorem 4.6]{hamm2021perturbations}, applying matrix CUR approximation to the frontal slices yields
\begin{equation*}
\|
\widehat{\mathcal{A}}^{(k)} - \widehat{\mathcal{C}}_{\mathcal{E}}^{(k)} (\widehat{\mathcal{U}}^{(k)}_{\mathcal{E}})^+\widehat{\mathcal{R}}^{(k)}_{\mathcal{E}}\|_{2}\leq c \| \widehat{\mathcal{E}}^{(k)}\|_{2},
\end{equation*}
where the constant $c$ depends only on $\|\widehat{\mathcal{C}}^{(k)}\|_2$, $\|(\widehat{\mathcal{U}}^{(k)})^+\|_2$ and $\|\widehat{\mathcal{R}}^{(k)}\|_2$.
By stacking the frontal slices and taking the maximum over $k$, we get 
\begin{equation*}
    \|\mathcal{A} - \widetilde{\mathcal{A}}\|_{2,*_M}\leq C \| \mathcal{E}\|_{2,*_M},
\end{equation*}
where $C$ depends only on $\|\mathcal{C}\|_{2,*_M}$, $\|\mathcal{U}^+\|_{2,*_M}$ and $\|\mathcal{R}\|_{2,*_M}$.
\end{proof}

\section{Numerical Experiments}\label{Sec:numerical_experiments}
\subsection{Formulation of the problem}

We apply the $*_M$-CUR tensor decomposition on video foreground-background separation.
In particular, given a tensor $\mathcal{X}$, the goal is to reconstruct a low-rank tensor $\mathcal{L^*}$ and a sparse tensor $\mathcal{S^*}$ such that
\begin{equation*}
    \mathcal{X}=\mathcal{L^*}+\mathcal{S^*}.
\end{equation*}
We remark that ``rank'' refers to the multirank in Definition \ref{defn:multirank}.
The non-convex optimization problem we thus aim to solve is
\begin{equation*}
    \min_{\mathcal{L},\mathcal{S}} \|\mathcal{A}-\mathcal{L}-\mathcal{S}\|_{F} \quad\text{for }\operatorname{rank}(\widehat{\mathcal{L}}^{(k)})\leq r \ \text{and sparse } \mathcal{S},
\end{equation*}
where  $\mathcal{L}$ and $\mathcal{S}$ are the outcomes of the reconstruction.
Specifically, for the $*_M$-CUR approximation of the low-rank component $\mathcal{L}$, we employ the Q-Discrete Empirical Interpolation Method (Q-DEIM) \citep{drmac2016new} for index selection.

\subsection{Video foreground-background separation}
We evaluate the $*_M$-CUR decomposition on different video sequences. 
To assess the performance of the $*_M$-CUR for different choices of matrix $M$, foreground–background separation is performed using the following matrices: the discrete cosine transform (DCT); discrete Fourier transform (DFT), where the $*_M$-product becomes the T-product; discrete sine transform (DST); and a data-dependent matrix $U_3$ obtained from the SVD of the mode-3 unfolding of the video sequence.
Table \ref{tab:reconstruction_metrics} reports background reconstruction metrics such as Average Gray-level Error (AGE), Percentage of Error Pixels (pEPs), and Peak Signal-to-Noise Ratio (PSNR), as well as the computational runtime for four different video sequences from the SBI dataset \cite{maddalena2015towards, bouwmans2017scene}. Given the background image ground truth, metrics are calculated for each frame and the average values are reported.
The runtime depends on the choice of $M$: in the injective case, $M\in\mathbb{R}^{2p\times p}$ yields the longest runtime; in the invertible case, $M\in\mathbb{R}^{p\times p}$; and in the surjective case, $M\in\mathbb{R}^{p\times 5}$ yields the lowest runtime.
Compared to the results reported in \cite{bouwmans2017scene}, the values in Table \ref{tab:reconstruction_metrics} show better performance for $*_M$-CUR in most metrics. We select a fixed $M$ for the subsequent experiments based on the values of Table \ref{tab:reconstruction_metrics}.

Figure \ref{fig:MCUR_comparison} showcases the difference in foreground reconstruction between the $*_M$-CUR, a matrix-wise robust CUR decomposition \citep{cai2021robust}, a tensor BM decomposition \citep{tian2025tensor}, and the SS-SVD \citep{kajo2018svd} for four video sequences of the CDnet dataset \citep{wang2014cdnet}. The foreground ground truth is included to allow visual comparison. 
From the results we observe that compared to the other CUR decomposition method, our proposed method shows less visual noise in some datasets (highway sequence) and slightly tighter boundaries of the foreground object in others (office sequence). In contrast, approximation models such as tensor BM and SS-SVD show vertical artifacts in the foreground reconstruction. We note also that the tensor BM decomposition took from 50--500 times longer than the other three methods, although the authors in \citep{tian2025tensor} discuss possible ways of reducing the runtime by using parallelization.

\begin{table*}[t]
\centering
\footnotesize
\setlength{\tabcolsep}{3pt}
\renewcommand{\arraystretch}{1.5}

\caption{Quantitative evaluation demonstrating the efficiency and accuracy of the $*_M$-CUR decomposition. We compare AGE, pEPs, PSNR, and runtime across different matrix types $M$. Entries are reported as \emph{surj / inv / inj}. The proposed method consistently achieves competitive performance, with the best values highlighted in bold.}
\label{tab:reconstruction_metrics}

\begin{tabular}{p{0.9cm}|l|r|r|r|r}
\toprule
Video & $M$ & AGE & pEPs & PSNR & Runtime (s) \\
\midrule

\multirow{4}{*}{\vseq{CAVIAR1}{[256,384,610]}}

& DCT &
12.4575 / 12.4689 / 12.4722 &
0.10918 / 0.11289 / 0.11329 &
24.85 / 24.84 / 24.83 &
\textbf{22.95} / 28.92 / 33.83 \\

& DFT &
12.2093 / \textbf{12.1580} / 12.2623 &
0.09992 / \textbf{0.09814} / 0.10099 &
25.01 / \textbf{25.05} / 24.98 &
25.42 / 43.29 / 62.00 \\

& DST &
12.3842 / 12.3870 / 12.4786 &
0.10828 / 0.11089 / 0.11229 &
24.89 / 24.88 / 24.84 &
24.31 / 27.87 / 36.35 \\

& U$_3$ &
12.3944 / 12.4056 / 12.4056 &
0.11064 / 0.11210 / 0.11210 &
24.85 / 24.86 / 24.86 &
27.94 / 34.18 / 39.53 \\

\midrule

\multirow{4}{*}{\vseq{HumanBody2}{[320,240,740]}}

& DCT &
4.6329 / 5.0693 / 4.9094 &
0.00608 / 0.00757 / 0.00676 &
32.09 / 31.43 / 31.70 &
18.55 / 25.32 / 31.15 \\

& DFT &
4.6986 / \textbf{4.5436} / 4.6058 &
\textbf{0.00582} / 0.00597 / 0.00591 &
\textbf{32.21} / 32.00 / 32.13 &
22.81 / 45.51 / 65.13 \\

& DST &
4.8380 / 5.1030 / 4.9303 &
0.00646 / 0.00766 / 0.00684 &
31.76 / 31.36 / 31.66 &
\textbf{18.49} / 23.95 / 28.61 \\

& U$_3$ &
5.5033 / 5.0501 / 5.0501 &
0.00776 / 0.00912 / 0.00912 &
30.88 / 31.31 / 31.31 &
23.34 / 30.67 / 35.91 \\

\midrule

\multirow{4}{*}{\vseq{IBMtest2}{[320,240,90]}}

& DCT &
3.4767 / 4.2296 / 3.9799 &
0.00101 / 0.00216 / 0.00193 &
35.21 / 33.51 / 33.93 &
\textbf{2.34} / 3.11 / 3.23 \\

& DFT &
\textbf{2.9598} / 3.0503 / 3.0102 &
\textbf{0.00082} / 0.00090 / \textbf{0.00082} &
\textbf{36.35} / 36.15 / 36.24 &
2.92 / 5.20 / 5.66 \\

& DST &
3.3238 / 3.9689 / 3.7869 &
0.00113 / 0.00179 / 0.00173 &
35.35 / 33.90 / 34.29 &
2.35 / 3.13 / 3.26 \\

& U$_3$ &
3.4828 / 4.1431 / 4.1431 &
0.00168 / 0.00228 / 0.00228 &
34.90 / 33.61 / 33.61 &
3.03 / 3.81 / 3.98 \\

\midrule

\multirow{4}{*}{\vseq{HighwayII}{[320,240,500]}}

& DCT &
3.7010 / 3.7292 / 3.7273 &
0.00479 / 0.00492 / 0.00486 &
32.11 / 32.06 / 32.08 &
12.98 / 15.55 / 18.10 \\

& DFT &
\textbf{3.6368} / 3.7286 / 3.6576 &
0.00471 / 0.00472 / \textbf{0.00468} &
\textbf{32.18} / 32.10 / 32.16 &
15.71 / 25.08 / 33.09 \\

& DST &
3.7043 / 3.7291 / 3.7004 &
0.00476 / 0.00488 / 0.00483 &
32.12 / 32.07 / 32.08 &
\textbf{12.52} / 15.57 / 17.97 \\

& U$_3$ &
3.6997 / 3.7641 / 3.7641 &
0.00481 / 0.00515 / 0.00515 &
32.16 / 32.05 / 32.05 &
15.62 / 18.63 / 21.14 \\

\bottomrule
\end{tabular}
\end{table*}

\begin{figure}[h!]
    \centering
    \includegraphics[width=0.95\linewidth]{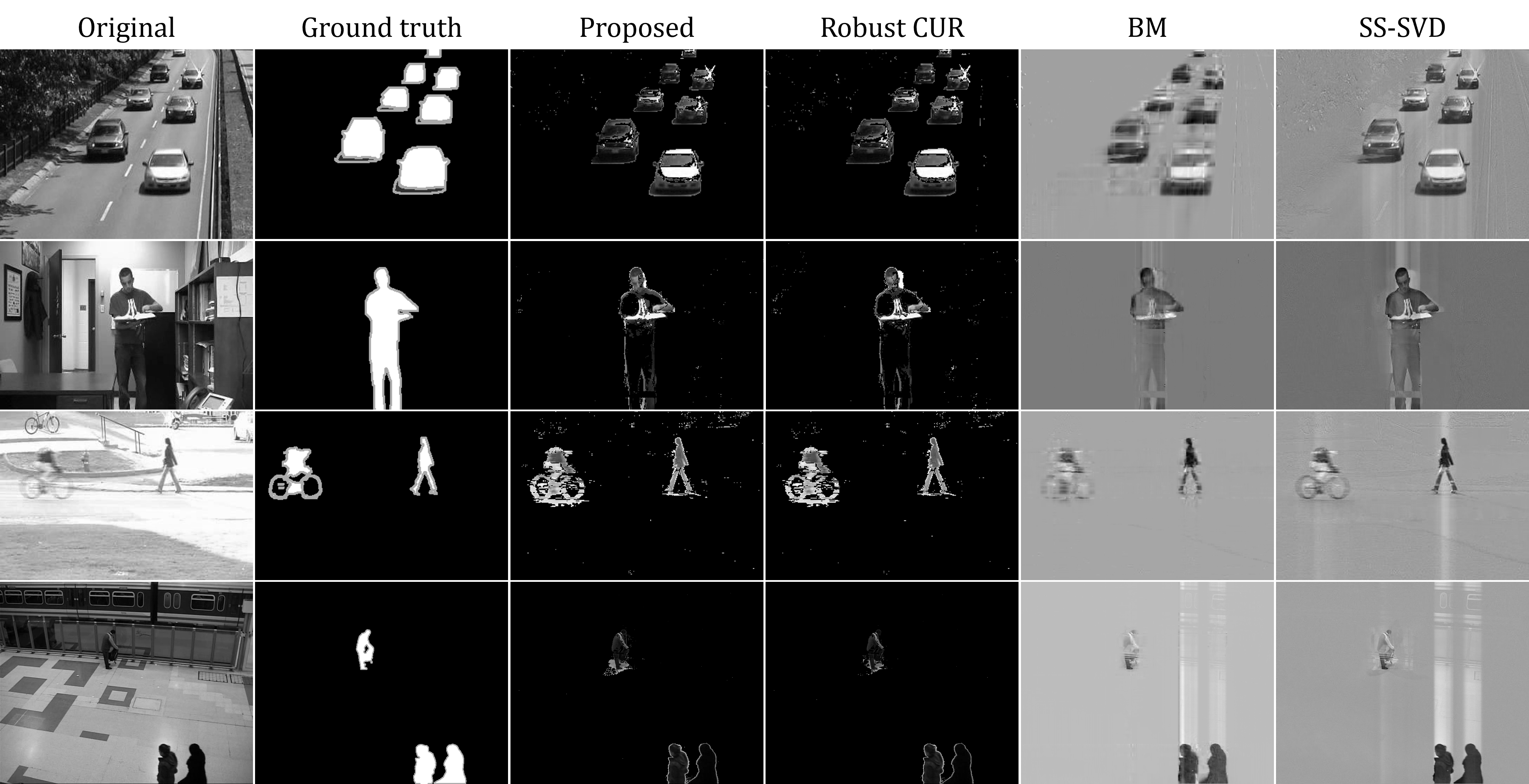}
    \caption{Visual comparison highlighting the robustness of the proposed $*_M$-CUR method against recent robust decomposition methods on the CDnet dataset. Our method (third column) effectively isolates the foreground with significantly reduced visual noise and sharper object boundaries compared to Robust CUR, Tensor BM, and SS-SVD. Selected frames: Highway (345), Office (1100), Pedestrians (175), and PETS2006 (620).}
    \label{fig:MCUR_comparison}
\end{figure}

\section{Conclusion}
We have introduced a tensor CUR decomposition under the $*_M$-product, which expands the family of tensor factorizations within the $*_M$-product algebraic setting.
Experiments on foreground-background separation on grayscale videos show that it can effectively perform the foreground-background separation task with compression and it visually outperforms the matrix-wise robust CUR decomposition in some datasets.

\section*{Data availability}
All experiments were implemented in Python and corresponding code and supplementary materials are available at the GitHub repository: 
\url{https://github.com/SusanaLM/LinearMapTensorCUR}.
Data sources are the CDnet dataset \citep{wang2014cdnet} available at \url{http://jacarini.dinf.usherbrooke.ca/dataset2014}, as well as the SBI dataset \cite{maddalena2015towards, bouwmans2017scene}, available at \url{https://sbmi2015.na.icar.cnr.it/SBIdataset.html}.

\section*{Acknowledgements}
The work of S. López-Moreno and J.-H. Lee was supported by the National Research Foundation of Korea (NRF) grant funded by the Korea government (MSIT) (RS-2024-00406152). Additionally, the work of S. López-Moreno was supported by Glocal University 30 Project at Pusan National University through the Institute for Regional System \& Education in Busan Metropolitan City, funded by the Ministry of Education (MOE) and the Busan Metropolitan City, Republic of Korea (2025-glocal-02-004-M432-01).
The work of T.~Kim was supported by the National Research Foundation of Korea (NRF) grant funded by the Korea government (MSIT) (No. 2022R1A5A1033624\&\,RS-2024-00342939\&\,RS-2025-25436769).
We would also like to thank Hyun-Min Kim and Jeong-Hoon Ju for their insightful comments.

{\footnotesize
\bibliographystyle{elsarticle-num-names} 
\bibliography{reference}
}

\end{document}